\providecommand{\U}[1]{\protect\rule{.1in}{.1in}}
\newtheorem{theorem}{Theorem}[section]
\theoremstyle{plain}
\newtheorem{lemma}{Lemma}[section]
\newtheorem{proposition}{Proposition}[section]
\numberwithin{equation}{section}
\def\e{\varepsilon}
\begin{document}
\title[Regularity for convex solutions]{Regularity for convex viscosity solutions of special Lagrangian equation\\ }
\author{Jingyi Chen}
\address{Department of Mathematics\\
University of British Columbia\\
Vancouver, BC V6T 1Z2}
\email{jychen@math.ubc.ca}
\author{Ravi Shankar}
\author{Yu YUAN}
\address{Department of Mathematics, Box 354350\\
University of Washington\\
Seattle, WA 98195}
\email{shankarr@uw.edu, yuan@math.washington.edu}
\thanks{JYC is partially supported by NSERC Discovery Grant (22R80062) and a grant
(No. 562829) from the Simons Foundation. RS and YY are partially supported by
NSF Graduate Research Fellowship Program under grant No. DGE-1762114 and NSF
grant DMS-1800495 respectively.}
\date{\today}

\begin{abstract}
We establish interior regularity for convex viscosity solutions of the special
Lagrangian equation. Our result states that all such solutions are real
analytic in the interior of the domain.

\end{abstract}
\maketitle

\section{\bigskip Introduction}

In this paper, we establish regularity for convex viscosity solutions of the
special Lagrangian equation%

\begin{equation}
F\left(  D^{2}u\right)  =\sum_{i=1}^{n}\arctan\lambda_{i}-\Theta=0,
\label{EsLag}%
\end{equation}
where $\lambda_{i}^{\prime}s$ are the eigenvalues of the Hessian $D^{2}u$ and
$\Theta$ is constant.

The fully nonlinear equation (\ref{EsLag}) arises from the special Lagrangian
geometry \cite{HL}. The \textquotedblleft gradient\textquotedblright\ graph
$\left(  x,Du\left(  x\right)  \right)  $ of the potential $u$ is a Lagrangian
submanifold in $\mathbb{R}^{n}\times\mathbb{R}^{n}.$ The Lagrangian graph is
called special when the phase, which at each point is the argument of the
complex number $\left(  1+\sqrt{-1}\lambda_{1}\right)  \cdots\left(
1+\sqrt{-1}\lambda_{n}\right)  ,$ is constant $\Theta,$ that is, $u$ satisfies
equation (\ref{EsLag}). A special Lagrangian graph is a volume minimizing
minimal submanifold in $\left(  \mathbb{R}^{n}\times\mathbb{R}^{n}%
,dx^{2}+dy^{2}\right)  .$

A dual form of (\ref{EsLag}) is the Monge-Amp\`{e}re equation%
\begin{equation}
\sum_{i=1}^{n}\ln\lambda_{i}-\Phi=0 \label{MA}%
\end{equation}
with $\Phi$ being constant, interpreted by Hitchin \cite{Hi} as the potential
equation for special Lagrangian submanifolds in $\left(  \mathbb{R}^{n}%
\times\mathbb{R}^{n},dxdy\right)  .$ Warren \cite{W} demonstrated the
\textquotedblleft gradient\textquotedblright\ graph $\left(  x,Du\left(
x\right)  \right)  $ is volume maximizing in this pseudo-Euclidean space.
Under a rotation introduced in \cite{Y2}, (\ref{MA}) becomes%
\begin{equation}
\sum_{i=1}^{n}\ln\frac{1+\lambda_{i}}{1-\lambda_{i}}-\Phi=0. \label{MAR}%
\end{equation}
Earlier on, Mealy \cite{Me} showed that an equivalent algebraic form of
(\ref{MAR}) is the potential equation for his volume maximizing/special
Lagrangian submanifolds in $\left(  \mathbb{R}^{n}\times\mathbb{R}^{n}%
,dx^{2}-dy^{2}\right)  .$

A fundamental problem for those geometrically as well as analytically
significant equations is regularity. Our main result is

\begin{theorem}
\label{theorem1} Let $u$ be a convex viscosity solution of (\ref{EsLag}) on
ball $B_{1}(0)\subset\mathbb{R}^{n}.$ Then $u$ is analytic in $B_{1}\left(
0\right)  $ and we have an effective Hessian bound
\[
|D^{2}u(0)|\leq C(n)\exp\left[  C(n)\operatorname*{osc}_{B_{1}\left(
0\right)  }u\right]  ^{2n-2},
\]
where $C(n)$ is a certain dimensional constant.
\end{theorem}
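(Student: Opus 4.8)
The plan is to establish the a priori Hessian estimate for smooth convex solutions first, and then remove the smoothness assumption by an approximation argument standard for convex viscosity solutions. The heart of the matter is the interior Hessian bound: once $|D^2 u|$ is controlled, equation \eqref{EsLag} becomes uniformly elliptic and concave (convexity of $u$ forces all $\lambda_i \geq 0$, so the phase $\Theta \in [0, n\pi/2)$, and on the convex branch $F$ is concave), whence Evans--Krylov gives $C^{2,\alpha}$, and Schauder bootstrapping together with the analyticity of $F$ yields interior analyticity via Morrey's theorem. So the real work is proving
\[
|D^2 u(0)| \leq C(n)\exp\Bigl[C(n)\operatorname*{osc}_{B_1(0)} u\Bigr]^{2n-2}
\]
for smooth convex $u$.

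For the Hessian estimate I would follow the Korevaar--Pogorelov test function strategy adapted to the special Lagrangian setting, in the spirit of the known interior estimates for \eqref{EsLag} under a convexity/semiconvexity hypothesis. Consider the largest eigenvalue $\lambda_{\max}(x) = \lambda_n$ of $D^2 u$, or more robustly $b = \log \lambda_{\max}$ localized via a cutoff built from $u$ itself, e.g. test the function $w = b + \text{(something like)}\ A\,|Du|^2$ or a Pogorelov-type quantity $(\text{osc}\,u - u + \text{linear})^\beta \lambda_{\max}$, and examine its interior maximum. The key computation is that on the level of the induced metric $g = I + (D^2u)^2$ on the Lagrangian graph, the function $b$ is (almost) subharmonic: the minimal surface structure gives a Jacobi-type inequality $\Delta_g b \geq c(n)|\nabla_g b|^2 - C(n)$ where $\Delta_g$ is the Laplace--Beltrami operator, coming from differentiating \eqref{EsLag} twice and using the concavity of $\arctan$ on $[0,\infty)$ (this is where convexity is essential and where the exponent $2n-2$ will ultimately enter, through iterating the dimension in the harmonic-function-type estimates). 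Combining this with the cutoff and the mean value / maximum principle on the graph, and using that the volume element and the cutoff are comparable to powers of $\operatorname{osc} u$, produces the stated bound after tracking constants carefully.

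I expect the main obstacle to be twofold. First, obtaining the sharp subharmonicity inequality for $\log\lambda_{\max}$ with the right sign and the right lower-order error: $\lambda_{\max}$ need not be smooth where the top eigenvalue has multiplicity, so one must either work with the smooth symmetric function $\log\det D^2u$ (which is $\Phi$-like and too weak alone) combined with a trace quantity, or justify the computation at a maximum point by the usual perturbation argument, and the algebra of the second derivative of $\sum \arctan\lambda_i$ in terms of $g$ is delicate. Second, and more seriously, controlling the gradient $|Du|$ and the geometry purely in terms of $\operatorname{osc}_{B_1} u$ rather than in terms of $\|Du\|_{L^\infty}$: here convexity helps enormously (a convex function on $B_1$ has its gradient on $B_{1/2}$ controlled by $\operatorname{osc}_{B_1} u$), but threading this through the iteration to land exactly on the exponent $2n-2$ — presumably via $n-1$ steps of a De Giorgi / mean-value iteration on the minimal graph, each contributing a squaring or a doubling — will require the most care. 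Once the a priori estimate is in hand, the passage to viscosity solutions is routine: mollify, solve the Dirichlet problem with the approximate convex boundary data (solvability on the convex branch is classical), apply the uniform estimate, and pass to the limit using stability of viscosity solutions and the $C^{1,1}$-compactness furnished by the Hessian bound.
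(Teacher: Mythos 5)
The heart of your plan---the Pogorelov/Jacobi-inequality a priori Hessian estimate for \emph{smooth} convex solutions with the bound $\exp[C(n)\operatorname*{osc}u]^{2n-2}$---is essentially the already-known result of \cite{CWY} (sharpened in \cite{WdY2}), which the paper simply quotes in its last step. The genuine gap is in the step you dismiss as ``routine'': passing from that estimate to the convex \emph{viscosity} solution. For subcritical phase $\Theta<(n-2)\frac{\pi}{2}$ the operator $\sum_i\arctan\lambda_i$ is a saddle on the space of symmetric matrices (its level set is not convex), so there is no classical solvability of the Dirichlet problem on small balls with merely $C^{0}$-approximated boundary data, unlike the critical/supercritical case. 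There is also no ``convex branch'' you can solve on: if you replace $F$ by a concave modification agreeing with $F$ on $\{\lambda_i\ge 0\}$ and solve that Dirichlet problem, nothing forces the resulting smooth solutions to be convex (the constraint $D^2u\ge 0$ is not preserved by the boundary value problem), so the a priori estimate for smooth \emph{convex} solutions does not apply to your approximants and the Evans--Krylov concavity hypothesis fails for them as well. Mollification does not rescue the scheme: $u\ast\rho_\varepsilon$ is convex and, by concavity of $F$ restricted to convex functions, a subsolution of \eqref{EsLag}, but it is not a solution. This is exactly the obstruction spelled out in the introduction of the paper, and it is why no argument of the form ``approximate, apply the smooth estimate, pass to the limit'' had settled the problem since 2009.

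The paper's route is genuinely different and is what you would need to supply. One rewrites the $\frac{\pi}{4}$-rotated (Lewy--Yuan) potential $\bar u$ in terms of the Legendre transform of $u$ (Proposition 2.1, Lemma 2.1), so the rotation makes sense for nonsmooth convex $u$; one then shows the rotation preserves viscosity supersolutions (by order preservation, Proposition 2.2) and preserves \emph{convex} viscosity subsolutions (via smooth convex subsolution approximation, Proposition 2.3). The rotated potential is then a viscosity solution of \eqref{Ea-sLag} with $-I\le D^2\bar u\le I$, and regularity for $\bar u$ comes not from Evans--Krylov but from the geometric measure theory argument of \cite{Y1} (tangent cones, dimension reduction, and the flatness criterion of \cite{Y2}). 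Finally, to rotate back one must rule out the degenerate slope $\bar\lambda_{\max}=1$; this is done by showing a function of the largest eigenvalues of $D^2\bar u$ is subharmonic on the graph and invoking the strong maximum principle, together with the observation that a quadratic touching $u$ from above forces $D^2\bar u<I$ somewhere. Only after all this is $u$ known to be smooth, at which point the effective bound follows from \cite{CWY}. Your proposal contains none of this machinery, and the place where it is needed is precisely where your argument breaks down.
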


One application of the above regularity result is that every entire convex
viscosity solution of (\ref{EsLag}) is a quadratic function; the smooth case
was done in [Y2]. In parallel, Caffarelli proved the rigidity for entire
convex viscosity solutions of the Monge-Amp\`{e}re equation, while the smooth
case is the classic work by J\"{o}rgens-Calabi-Pogorelov and also Cheng-Yau.
Another consequence is the existence of interior smooth solutions of the
second boundary value problem for (\ref{EsLag}) between general convex domains
$\Omega$ and $\tilde{\Omega}$ on $\mathbb{R}^{n}.$ For uniformly convex
domains $\Omega$ and $\tilde{\Omega}$ with smooth boundaries, this problem was
solved by Brendle-Warren \cite{BW}. The extension can be handled as follows:
under smooth, uniformly convex approximations of the two general convex
domains, a $C^{0}$ limit of Brendle-Warren solutions is still a convex
viscosity solution of (\ref{EsLag}), and in turn, interior smooth by Theorem
1.1. The boundary behavior of the solutions remains unclear to us. One
by-product of our arguments for the above theorem is that, by Lemma 2.1, we
can remove the local $C^{1,1}$ assumption on the initial convex potential, for
the long time existence of Lagrangian mean curvature flow in [CCY, Theorem 1.2
and Theorem 1.3].

Regularity for two dimensional Monge-Amp\`{e}re type equations including
(\ref{EsLag}) with $n=2$ was achieved by Heinz \cite{H} in the 1950's.
Regularity for continuous viscosity solutions of (\ref{EsLag}) with critical
and supercritical phases $\left\vert \Theta\right\vert \geq\left(  n-2\right)
\frac{\pi}{2}$ follows from the a priori estimates developed in [WY1,2,3]
[CWY] [WdY2]. Singular semiconvex viscosity solutions of (\ref{EsLag})
certainly with subcritical phase $\left\vert \Theta\right\vert <\left(
n-2\right)  \frac{\pi}{2}$ and $n\geq3$ constructed by
Nadirashvili-Vl\u{a}du\c{t} [NV] and in [WdY1], show that the convexity
condition in Theorem 1.1 is necessary. In comparison, there are singular
convex viscosity solutions (Pogorelov $C^{1,1-2/n},$ or more singular ones) to
the Monge-Amp\`{e}re equation $\det D^{2}u=f(x)  .$ Under a
necessary strict convexity assumption on convex viscosity solutions, interior
regularity was obtained respectively by Pogorelov [P] for smooth enough right
hand side $f(x)  ,$ by Urbas [U] for Lipschitz $f(x)$ and
$C^{1,(1-2/n)  ^{+}}$ solutions, and by
Caffarelli [C] for H\"{o}lder $f(x)  .$

There have been attempts for the regularity of convex viscosity solutions of
(\ref{EsLag}) since our work \cite{CWY} in 2009 on a priori estimates for
smooth convex solutions, as in the critical and supercritical cases. In those
latter cases, one can smoothly solve the Dirichlet problem for (now concave
\cite{Y3}) equation (\ref{EsLag}) on any interior small ball, with smooth
boundary data approximating the continuous viscosity solution on the boundary
in $C^{0}$ norm. The a priori estimates in [WY1,3] [WdY2] depend only on
$C^{0}$ norm of the $C^{0}$ viscosity solution on the boundary, thus allow one
to draw a smooth limit to the $C^{0}$ viscosity solution. Hence, the
regularity for viscosity solutions follows. We are not able to find smooth
convex solutions of (now saddle \cite{Y3}) equation \eqref{EsLag} of
subcritical phase $\Theta<(n-2)\frac{\pi}{2},$ with smooth boundary data
approximating the convex viscosity solution there in $C^{0}$ norm. Even if we
solve the Dirichlet problem for a modified concave equation $\hat{F}%
(D^{2}u)=\sum_{1}^{n}f(\lambda_{i})-\Theta=0$ with $f(\lambda)=\arctan\lambda$
for $\lambda\geq0$ and $\lambda$ for $\lambda<0,$ the smooth solutions with
the approximated smooth boundary data may not be convex. Unless one proves
similar a priori estimates directly for the modified equation, we cannot gain
regularity by drawing a smooth limit to the original convex viscosity
solution, with the a priori estimates for smooth convex special Lagrangian
solutions in [CWY].

Another natural way is to work over a rotated coordinate system introduced in
\cite{Y2}, so that the slope of the \textquotedblleft
gradient\textquotedblright\ graph of the solution drops to the range $[-1,1]$
from $[0,+\infty].$ As every graphical tangent cone to the minimal Lagrangian
graph with such restricted slopes is flat, via the machinery from geometric
measure theory, \cite{Y1} gives a $C^{2,\alpha}$ interior bound, hence
regularity in rotated coordinates.

The first difficulty is in dealing with the multivalued \textquotedblleft
gradient\textquotedblright\ graph over the rotated coordinate system. We
relate the above $\frac{\pi}{4}$-rotation to the conjugated $\frac{\pi}{2}%
$-rotation, that is, at potential level, we rewrite the rotated potential in
terms of the Legendre transform (convex conjugate) of the old potential
(Proposition 2.1, Lemma 2.1. Geometrically, the \textquotedblleft
singular\textquotedblright\ multivalued gradient \textquotedblleft
graph\textquotedblright\ is a Lipschitz one in the rotated coordinates. This
leads to yet another proof for the Alexandrov Theorem: every semiconvex
function is second order differentiable almost everywhere.). The second one is
showing the rotated potential is still a viscosity solution of \eqref{EsLag}
(with a decreased phase). The preservation of supersolutions is simple because
of the order preservation and the respect for uniform convergence of the
rotation operation (Proposition 2.2). The preservation of subsolutions under
the rotations is no quick matter. Unlike in the supersolution case, we are
only able to show the preservation of convex subsolutions, by convex smooth
subsolution approximations of the original convex subsolution of (now concave)
equation \eqref{EsLag} (Proposition 2.3). There is one last hurdle in making
sure the slope of the \textquotedblleft gradient\textquotedblright\ graph over
the $\frac{\pi}{4}$-rotated coordinate is not $1,$ the largest possible.
Otherwise the original potential cannot have bounded Hessian. It turns out the
maximum eigenvalue of the rotated Hessian is a subsolution of the linearized
equation of the now saddle equation (\ref{EsLag}). The strong maximum
principle then finishes the job; see Section 3.

\section{Preliminaries}

\subsection{Smooth functions and solutions}

Via the Legendre transform, we directly connect the original potential $u$ for
the Lagrangian graph $\left(  x,Du\left(  x\right)  \right)  $ on
$\mathbb{R}^{n}\times\mathbb{R}^{n}=\mathbb{C}^{n},$ with the $\alpha$-rotated
potential $\bar{u}\left(  \bar{x}\right)  $ for the same Lagrangian
submanifold $\left(  \bar{x},Du\left(  \bar{x}\right)  \right)  $ on
$\mathbb{C}^{n},$ under (anti-clockwise $\alpha\in\left(  0,\frac{\pi}%
{2}\right)  $) coordinate rotation $\bar{z}=e^{-i\alpha}z.$ As in [Y2, p.
124], assuming semiconvexity $D^{2}u>-\cot\alpha\ I$ and denoting $\left(
c,s\right)  =\left(  \cos\alpha,\sin\alpha\right)  ,$ the two
\textquotedblleft gradients\textquotedblright\ are related by%

\[
\left\{
\begin{array}
[c]{c}%
\bar{x}=cx+sDu\left(  x\right) \\
D\bar{u}\left(  \bar{x}\right)  =-sx+cDu\left(  x\right)
\end{array}
\right.  .
\]
Writing the original \textquotedblleft gradient\textquotedblright%
\ $u_{x}\left(  x\right)  =Du\left(  x\right)  $ in terms of the old
independent $x$ and the new variable $\bar{x}=cx+su_{x},$ and applying the
product rule, one has the \textquotedblleft gradient\textquotedblright%
\ connection%
\begin{align*}
d\bar{u}\left(  \bar{x}\right)   &  =\bar{u}_{\bar{x}}d\bar{x}=\left(
-sx+cu_{x}\right)  d\bar{x}\\
&  =\left[  -sx+c\frac{\bar{x}-cx}{s}\right]  d\bar{x}=\left[  \frac{c}{s}%
\bar{x}-\frac{1}{s}x\right]  d\bar{x}\\
&  =d\left\{  \frac{1}{2}\frac{c}{s}\bar{x}^{2}-\frac{1}{s}\left[  \bar
{x}x-\left(  su\left(  x\right)  +\frac{c}{2}x^{2}\right)  \right]  \right\}
,
\end{align*}
and up to a constant, in formal notation instead of the above
\textquotedblleft abused\textquotedblright\ one, the potential connection, as
in [CW, p.334-335],%
\[
\bar{u}\left(  \bar{x}\right)  =\frac{1}{2}\frac{c}{s}\left\vert \bar
{x}\right\vert ^{2}-\frac{1}{s}\left[  \bar{x}\cdot x-\left(  su\left(
x\right)  +\frac{c}{2}\left\vert x\right\vert ^{2}\right)  \right]  .
\]

When $\alpha=\frac{\pi}{2},$ the rotated potential $\bar{u}\left(  \bar
{x}\right)  =-\left[  \bar{x}\cdot x-u\left(  x\right)  \right]  $ is the
negative of the Legendre transform of $u\left(  x\right)  .$ Recall the
Legendre transform of a strictly convex (not necessarily smooth) function
$f\left(  x\right)  $ on $B_{1}$ is usually formulated in an extremal form
\begin{equation}
f^{\ast}(y)=\sup_{x\in B_{1}}\left[  y\cdot x-f(x)\right]  \label{Legendre}%
\end{equation}
for $y\in\partial f\left(  B_{1}\right)  .$ We record the following analytic
interpretation of the $\alpha$-rotated potential in terms of the Legendre
transform of the original one.

\begin{proposition}
Suppose $u(x)$ is smooth and $D^{2}u>-\cot\alpha\ I$ on $B_{1}.$ Then the
smooth function%
\begin{equation}
\bar{u}\left(  \bar{x}\right)  =\frac{1}{2}\frac{c}{s}\left\vert \bar
{x}\right\vert ^{2}-\frac{1}{s}\left[  su\left(  x\right)  +\frac{c}%
{2}\left\vert x\right\vert ^{2}\right]  ^{\ast}\left(  \bar{x}\right)
\label{a-rotation}%
\end{equation}
for $\bar{x}\in\left[  cx+sDu\left(  x\right)  \right]  \left(  B_{1}\right)
,$ is a potential for the Lagrangian graph $\left(  x,Du\left(  x\right)
\right)  $ under the anti-clockwise coordinate rotation $\bar{z}=e^{-i\alpha
}z.$
\end{proposition}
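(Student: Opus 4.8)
The plan is to reduce the statement to classical Legendre duality for the auxiliary function $v(x):=su(x)+\frac{c}{2}\left\vert x\right\vert ^{2}$, and then to verify by a one-line gradient computation that the ``gradient'' graph of $\bar{u}$ is precisely the image of the ``gradient'' graph of $u$ under the rotation $z\mapsto e^{-i\alpha}z$.

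First I would check that $v$ is smooth and uniformly convex on $B_{1}$. Since $s=\sin\alpha>0$ and $\cot\alpha=c/s$, the hypothesis $D^{2}u>-\cot\alpha\,I$ gives $D^{2}v=sD^{2}u+cI>s\bigl(-\tfrac{c}{s}\bigr)I+cI=0$. Consequently $x\mapsto Dv(x)=cx+sDu(x)=:\bar{x}$ is a smooth diffeomorphism of $B_{1}$ onto the open set $\Omega:=\left[  cx+sDu(x)\right]  (B_{1})$; for each $\bar{x}\in\Omega$ the supremum in \eqref{Legendre} defining $v^{\ast}(\bar{x})$ is attained at the unique interior point $x=x(\bar{x})$ solving $Dv(x)=\bar{x}$; and $v^{\ast}$ is smooth on $\Omega$ with $Dv^{\ast}(\bar{x})=x(\bar{x})$. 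This is just the smooth Legendre involution for uniformly convex potentials, obtained e.g.\ from the implicit function theorem applied to $Dv(x)=\bar{x}$.

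Next I would differentiate the defining formula $\bar{u}(\bar{x})=\frac{c}{2s}\left\vert \bar{x}\right\vert ^{2}-\frac{1}{s}v^{\ast}(\bar{x})$ and use Step~1 to get $D\bar{u}(\bar{x})=\frac{c}{s}\bar{x}-\frac{1}{s}x(\bar{x})$. Substituting $\bar{x}=cx+sDu(x)$ and using $c^{2}-1=-s^{2}$, the bracket collapses: $c\bar{x}-x=(c^{2}-1)x+csDu(x)=s\bigl(-sx+cDu(x)\bigr)$, hence $D\bar{u}(\bar{x})=-sx+cDu(x)$. On the other hand, writing $z=x+\sqrt{-1}\,Du(x)$ and $\bar{z}=e^{-i\alpha}z=\bar{x}+\sqrt{-1}\,\bar{y}$, the coordinate rotation reads componentwise $\bar{x}=cx+sDu(x)$, $\bar{y}=-sx+cDu(x)$. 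Thus $(\bar{x},\bar{y})=(\bar{x},D\bar{u}(\bar{x}))$: the ``gradient'' graph of $\bar{u}$ over $\Omega$ equals the image of $\{(x,Du(x)):x\in B_{1}\}$ under $z\mapsto e^{-i\alpha}z$, which is a graph over $\Omega$ by the invertibility in Step~1. Since a ``gradient'' graph is automatically Lagrangian, $\bar{u}$ is a potential for it. (The additive constant is the one singled out by the extremal form \eqref{Legendre}; any other choice differs by a constant and is an equally valid potential.)

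The only point I expect to require care is the identification in Step~1 of the extremal Legendre transform \eqref{Legendre}, taken over the \emph{bounded} ball $B_{1}$, with the smooth Legendre conjugate of $v$ on $\Omega=Dv(B_{1})$, together with the relation $Dv^{\ast}=(Dv)^{-1}$. Once this standard fact is recorded, Steps~2--3 are the short computation above.
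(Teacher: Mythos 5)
Your proof is correct and is essentially the paper's own argument: the paper derives the formula by integrating the one-form $d\bar{u}=(-sx+cu_{x})\,d\bar{x}$ using $\bar{x}=cx+sDu(x)$ and then recognizes $\bar{x}\cdot x-\tilde{u}(x)$ as $\tilde{u}^{\ast}(\bar{x})$, while you run the same computation in reverse, differentiating the formula and using the standard smooth Legendre duality $Dv^{\ast}=(Dv)^{-1}$ for $v=su+\frac{c}{2}\vert x\vert^{2}$. Your added care about attainment of the supremum in \eqref{Legendre} and invertibility of $Dv$ is exactly the implicit content of the paper's identification, so there is no substantive difference.
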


Observe that the canonical angles between each tangent plane of the Lagrangian
graph $\left(  x,Du\left(  x\right)  \right)  $ and the $\alpha$-rotated
$\bar{x}$-plane, decrease from the original ones with respect to the $x$-plane
by $\alpha$%
\[
\arctan\bar{\lambda}_{i}\left(  D^{2}\bar{u}\right)  =\arctan\lambda
_{i}\left(  D^{2}u\right)  -\alpha;
\]
consequently we see the $\alpha$-rotated potential $\bar{u}$ satisfies the
equation%
\begin{equation}
\bar{F}\left(  D^{2}\bar{u}\right)  =\sum_{i=1}^{n}\arctan\bar{\lambda}%
_{i}\left(  D^{2}\bar{u}\right)  -\Theta+n\alpha=0, \label{Ea-sLag}%
\end{equation}
given the equation (\ref{EsLag}) for $u.$

\subsection{Convex functions and viscosity solutions}

The $\alpha$-rotation $\bar{u}$ in (\ref{a-rotation}) still makes sense if
$u\left(  x\right)  +\frac{1}{2}\cot\alpha$ $\left\vert x\right\vert ^{2}$ is
strictly convex (not necessarily smooth). Indeed, we have

\begin{lemma}
Let $u+\frac{1}{2}\left(  \cot\alpha-\delta\right)  \left\vert x\right\vert
^{2}$ be convex on $B_{1}\left(  0\right)  $ for $\delta>0.$ Then the $\alpha
$-rotation $\bar{u}$ in (\ref{a-rotation}) is defined on $\bar{\Omega
}=\partial\tilde{u}\left(  B_{1}\left(  0\right)  \right)  $ with $\tilde
{u}=su+\frac{c}{2}\left\vert x\right\vert ^{2}.$ Moreover, $\bar{\Omega}$ is
open and connected.
\end{lemma}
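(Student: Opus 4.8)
The plan is to reduce everything to the classical fact that the subdifferential map of a convex function, and in particular its image, behaves well, and then to track how the strict convexity assumption $D^2u > -(\cot\alpha-\delta)I$ (in the distributional sense) transfers to $\tilde u = su + \frac{c}{2}|x|^2$. First I would observe that $\tilde u + \frac{s\delta}{2}|x|^2 = su + \frac{c + s\delta}{2}|x|^2 = s\bigl(u + \frac{1}{2}(\cot\alpha - \delta)|x|^2\bigr)\cdot\frac{1}{1}$ — more precisely, since $c = s\cot\alpha$, we have $\tilde u = s\bigl(u + \frac12 \cot\alpha |x|^2\bigr)$, so the hypothesis says exactly that $\tilde u + \frac{s\delta}{2}|x|^2$ is convex, i.e. $\tilde u$ is \emph{uniformly} (strictly) convex on $B_1(0)$ with modulus $s\delta$. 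This is the key reformulation: $\bar u$ in \eqref{a-rotation} is, up to the smooth additive term $\frac12\frac{c}{s}|\bar x|^2$ and the positive factor $\frac1s$, the Legendre transform $\tilde u^*$, so $\bar u$ is defined precisely on the domain of $\tilde u^*$, namely $\partial\tilde u(B_1(0))$. Since $\tilde u^*$ is itself convex and finite on $\partial\tilde u(B_1(0))$, and adding the smooth quadratic preserves being well-defined, $\bar u$ is defined (and in fact finite and locally Lipschitz) exactly on $\bar\Omega = \partial\tilde u(B_1(0))$.

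Next I would establish that $\bar\Omega = \partial\tilde u(B_1(0))$ is open. Here strict convexity of $\tilde u$ is what I need: for a strictly convex function on an open set, every subgradient $p \in \partial\tilde u(x)$ with $x \in B_1(0)$ is attained \emph{only} at $x$ (if $p \in \partial\tilde u(x_1)\cap\partial\tilde u(x_2)$ then by the subgradient inequalities applied both ways one forces $\tilde u$ to be affine on $[x_1,x_2]$, contradicting strict convexity unless $x_1=x_2$). Thus the set-valued map $\partial\tilde u$, restricted to $B_1(0)$, has the property that its graph projects injectively onto the $B_1(0)$ factor. I would then invoke the standard fact that for $\tilde u$ uniformly convex with modulus $m := s\delta$, the map $\partial\tilde u$ is ``uniformly monotone'': $\langle p_1 - p_2, x_1 - x_2\rangle \ge m|x_1-x_2|^2$ for $p_i \in \partial\tilde u(x_i)$, so in particular the inverse map $(\partial\tilde u)^{-1} = \partial\tilde u^*$ is single-valued and $\frac1m$-Lipschitz on $\partial\tilde u(B_1(0))$. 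Openness then follows from invariance of domain, or more directly: $\partial\tilde u^*$ is a continuous injection from $\bar\Omega$ into $B_1(0)$, and its inverse (a restriction of $\partial\tilde u$, which is an upper-semicontinuous monotone map whose single-valued locus is dense) is continuous where defined; a cleaner route is to note $\bar\Omega = \bigcup_{x\in B_1(0)} \partial\tilde u(x)$ and that around any $p_0 = \partial\tilde u(x_0)$ (at a point of differentiability, dense in $B_1$) one can solve $\partial\tilde u(x) = p$ for $p$ near $p_0$ by the uniform convexity (this is the Legendre duality for convex functions, giving a local homeomorphism), and then handle the general (non-differentiability) points by the fact that $\partial\tilde u(x_0)$ for such $x_0$ is a convex body of subgradients each of which is a limit of gradients at nearby smooth points, all of which lie in $\bar\Omega$, combined with convexity of $\bar\Omega$. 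Actually the slickest argument: $\tilde u^*$ is convex and finite on $\bar\Omega$; a convex function finite on a set is finite on its convex hull; and the \emph{relative interior} of the domain of $\tilde u^*$ equals $\mathrm{int}(\partial\tilde u(B_1))$ when $B_1$ is open — I would use that $\partial \tilde u(B_1)$ has nonempty interior (from uniform convexity, it contains a ball) and that $\partial\tilde u$ of an open set is open for a \emph{strictly} convex function, a standard lemma (e.g.\ Rockafellar): the image under the subdifferential of an open subset of the domain of a strictly convex function is open.

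For connectedness, I would argue that $B_1(0)$ is connected and $\bar\Omega$ is a ``continuous image'' of it in the appropriate sense. Concretely: let $G \subset B_1(0)$ be the set of points of differentiability of $\tilde u$; $G$ is dense (full measure) and $D\tilde u : G \to \bar\Omega$ is continuous with image dense in $\bar\Omega$ (every subgradient at any point is approximable by gradients at nearby differentiability points — Rademacher/subgradient closure). So $\overline{D\tilde u(G)} = \bar\Omega$. But $D\tilde u(G)$ need not itself be connected; instead I would use that $\partial\tilde u^* : \bar\Omega \to B_1(0)$ is a continuous bijection (single-valued by uniform convexity of $\tilde u$, surjective onto $B_1$ since $\tilde u$ is finite and $B_1$ open so every $x\in B_1$ has $\partial\tilde u(x)\neq\emptyset$, injective by strict convexity of $\tilde u^*$ shown as above) — a continuous bijection from $\bar\Omega$ onto the connected set $B_1(0)$; combined with openness of $\bar\Omega$ and invariance of domain, $\partial\tilde u^*$ is a homeomorphism, hence $\bar\Omega \cong B_1(0)$ is connected. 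The main obstacle is the openness claim: one must be careful that non-differentiability points of $\tilde u$ do not create boundary points of $\bar\Omega$ that are nonetheless hit, and that the uniform (not merely strict) convexity is genuinely used to get the Lipschitz inverse $\partial\tilde u^*$ and hence a clean application of invariance of domain; the connectedness is then essentially free once openness and the bijection $\partial\tilde u^*$ are in hand.
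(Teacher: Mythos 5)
Your reformulation --- the hypothesis makes $\tilde u = su+\frac{c}{2}\vert x\vert^{2}$ uniformly convex with modulus $s\delta$ (note the sign: it is $\tilde u - \frac{s\delta}{2}\vert x\vert^{2}$, not $\tilde u + \frac{s\delta}{2}\vert x\vert^{2}$, that is convex), so that $\partial\tilde u$ is $s\delta$-strongly monotone and $(\partial\tilde u)^{-1}=\partial\tilde u^{*}$ is single-valued and $\frac{1}{s\delta}$-Lipschitz on $\bar\Omega$ --- is sound, and it is the same starting point as the paper. But both of your main steps have genuine problems. For openness, invariance of domain cannot be applied: the only single-valued continuous map you produce, $D\tilde u^{*}$, goes from $\bar\Omega$ to $B_{1}$, and to invoke invariance of domain for it you would already need $\bar\Omega$ open (while $\partial\tilde u$ itself is set-valued, so it is not a candidate). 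The fallback ``convexity of $\bar\Omega$'' is false in general: subdifferential images of convex sets need not be convex. The one workable route you mention --- that $\partial\tilde u$ of an open set is open for a strictly convex function --- is essentially the content to be proved and you only assert it; the honest way to get it here is to note that strict (uniform) convexity of $\tilde u$ makes the maximizer in the Legendre transform unique, hence $\tilde u^{*}\in C^{1}(\mathbb{R}^{n})$ with continuous gradient, and then $\bar\Omega=(D\tilde u^{*})^{-1}(B_{1})$ is open. (The paper instead proves openness by hand and quantitatively, producing an explicit ball $\bar B_{s\delta(1-\vert a\vert)}\left(\partial\tilde u(a)\right)\subset\bar\Omega$ via a lifted supporting-plane argument together with the subdifferential sum rule.)

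For connectedness your argument is wrong, not merely incomplete: you claim $\partial\tilde u^{*}:\bar\Omega\rightarrow B_{1}$ is injective ``by strict convexity of $\tilde u^{*}$,'' but $\tilde u^{*}$ is strictly convex only where $\tilde u$ is differentiable. At a corner $x_{0}$ of $\tilde u$ --- which cannot be excluded, since handling multivalued $\partial u$ is the whole point of the lemma --- $\tilde u^{*}$ is affine on the convex set $\partial\tilde u(x_{0})$, and every point of that set is sent to $x_{0}$ by $D\tilde u^{*}$. So there is no continuous bijection and no homeomorphism $\bar\Omega\cong B_{1}$, and your deduction of connectedness collapses. Connectedness is still true, but it needs a different argument, for instance the paper's: $\partial\tilde u$ is upper semicontinuous with convex (hence connected) values [R, Corollary 24.5.1], and the image of the connected set $B_{1}$ under such a set-valued map is connected. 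A small related misstatement: what strict convexity of $\tilde u$ gives is that the graph of $\partial\tilde u$ projects injectively onto the gradient factor, not onto the $B_{1}$ factor.
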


\begin{proof}
Step 1. The subdifferential $\partial f\left(  a\right)  $ for any convex
function $f\left(  x\right)  $ at $a$ is the set of all the gradients of those
linear support functions for $f\left(  x\right)  $ at $x=a.$ Recall $\partial
f\left(  a\right)  $ is bounded, closed, and convex. The convexity of
$\partial f\left(  a\right)  $ is because convex combinations of those linear
support functions remain linear support ones at $x=a.$ The sum rule is valid
for subdifferentials of two convex functions; see [R,Theorem 23.8]. For
completeness, we include a proof of the (still subtle) sum rule with the other
convex function being a quadratic one%
\[
\partial\left(  v+Q\right)  \left(  x\right)  =\partial v\left(  x\right)
+\partial Q\left(  x\right)  .
\]

For the sake of simple notation, we only present the proof at $x=0.$ By
subtracting linear supporting functions from $v$ and $Q$ at $x=0,$ we assume
that $v\left(  0\right)  =0,$ $v\geq0,$ $\vec{0}\in\partial v\left(  0\right)
,$ and $Q\left(  x\right)  =0.5\kappa\left\vert x\right\vert ^{2}$ with
$\kappa>0.$ The inclusion $\partial\left(  v+Q\right)  \left(  0\right)
\supseteq\partial v\left(  0\right)  +\partial Q\left(  0\right)  $ is easy,
because the sum of any two linear supporting functions at the same point for
two convex functions, is still a linear supporting function for the sum
function at that same point. On the other hand, for any $Y\in\partial\left(
v+Q\right)  \left(  0\right)  ,$ we show $Y\in\partial v\left(  0\right)
+\partial Q\left(  0\right)  =\partial v\left(  0\right)  .$ Otherwise, even
$\left(  1-\eta\right)  Y$ for a small $\eta>0$ is not in the bounded closed
convex set $\partial v\left(  0\right)  \ni\vec{0}.$ This means non-vanishing
linear function $\left(  1-\eta\right)  Y\cdot x$ would be larger than
$v\left(  x\right)  ,$ along a sequence $x_{\gamma}$ going to $0$ with $Y\cdot
x_{\gamma}>0.$ In turn%
\[
\left(  1-\eta\right)  Y\cdot x_{\gamma}+0.5\kappa\left\vert x_{\gamma
}\right\vert ^{2}\geq v\left(  x_{\gamma}\right)  +0.5\kappa\left\vert
x_{\gamma}\right\vert ^{2}\geq Y\cdot x_{\gamma}.
\]
Then $0.5\kappa\left\vert x_{\gamma}\right\vert ^{2}\geq\eta Y\cdot x_{\gamma
}>0.$ Impossible for small $x_{\gamma}.$

Step 2. We first prove $\bar{\Omega}$ contains $\bar{B}_{s\delta}\left(
\partial\tilde{u}\left(  0\right)  \right)  .$ For any subdifferential
$\bar{x}_{0}\in$ $\partial\tilde{u}\left(  0\right)  ,$ by subtracting linear
function $\bar{x}_{0}\cdot x+\tilde{u}\left(  0\right)  $ from $\tilde{u},$ we
assume $\bar{x}_{0}=0\in\partial\tilde{u}\left(  0\right)  $ and $\tilde
{u}\left(  0\right)  =0,\ $then $\tilde{u}\geq0$ in $B_{1}\left(  0\right)  .$
For any $\left\vert \bar{x}_{\ast}\right\vert <s\delta,$ the linear function
$L\left(  x\right)  =\bar{x}_{\ast}\cdot\left(  x-\frac{1}{s\delta}\bar
{x}_{\ast}\right)  +\frac{1}{2s\delta}\left\vert \bar{x}_{\ast}\right\vert
^{2}$ touches $Q\left(  x\right)  =\frac{s\delta}{2}\left\vert x\right\vert
^{2}$ from below at $x=\frac{1}{s\delta}\bar{x}_{\ast}.$ Because all the
directional derivatives of $\tilde{u}-Q$ are nonnegative at $0,$ and also the
directional derivative of $\tilde{v}-Q$ along each ray from $0$ is increasing,
we have the ordering $\tilde{u}\left(  x\right)  \geq Q\left(  x\right)  \geq
L\left(  x\right)  .$ We move up the linear function $L\left(  x\right)  $
until it touches the graph of $\tilde{u}\left(  x\right)  $ in $\mathbb{R}%
^{n}\times\mathbb{R}^{1}$ the first time at $x=b.$ Without loss of generality,
we assume that in the first place, $\tilde{u}$ is already extended to an
entire $s\delta$-convex function on $\mathbb{R}^{n}.$ Note that the point $b$
cannot be outside $B_{1}\left(  0\right)  .$ Otherwise, by the sum rule in
Step 1, $\bar{x}_{\ast}\in\partial\tilde{u}\left(  b\right)  =\partial
v\left(  b\right)  +s\delta\cdot b$ and $0\in\partial\tilde{u}\left(
0\right)  =\partial v\left(  0\right)  +\partial Q\left(  0\right)  =\partial
v\left(  0\right)  $ with $v=\tilde{u}-Q.$ Then we have the slope increasing
property for $\tilde{u}:$
\[
\left\vert \bar{x}_{\ast}-0\right\vert ^{2}=\left\vert \partial v\left(
b\right)  +s\delta b\right\vert ^{2}=\left\vert \partial v\left(  b\right)
\right\vert ^{2}+\left\vert s\delta b\right\vert ^{2}+2\left\langle \partial
v\left(  b\right)  ,s\delta b\right\rangle \geq\left\vert s\delta b\right\vert
^{2}\geq\left\vert s\delta\right\vert ^{2},
\]
where the \textquotedblleft abused\textquotedblright\ notation $\partial
v\left(  b\right)  \ $means $\bar{x}_{\ast}-s\delta b,$ and the inequality
$\left\langle \partial v\left(  b\right)  ,b\right\rangle \geq0$ comes from
the summation of the following two for convex function $v$%
\begin{align*}
v\left(  b\right)  -v\left(  0\right)   &  \geq\left\langle 0,b-0\right\rangle
,\\
v\left(  0\right)  -v\left(  b\right)   &  \geq\left\langle \partial v\left(
b\right)  ,0-b\right\rangle .
\end{align*}
It contradicts $\left\vert \bar{x}_{\ast}\right\vert <s\delta.$ Thus
$\bar{\Omega}$ contains $\bar{B}_{s\delta}\left(  \partial\tilde{u}\left(
0\right)  \right)  .$

Similarly $\bar{\Omega}$ contains $\bar{B}_{s\delta\left(  1-\left\vert
a\right\vert \right)  }\left(  \partial\tilde{u}\left(  a\right)  \right)  $
for all $a\in B_{1}\left(  0\right)  ,$ and in turn, as a union of those open
sets, $\bar{\Omega}$ is open.

Lastly, the connectedness of the $\bar{\Omega}=\partial\tilde{u}\left(
B_{1}\left(  0\right)  \right)  $ follows from the continuity of the mapping
$\partial\tilde{u}:B_{1}\left(  0\right)  \rightarrow\bar{\Omega}$ in the
sense that, given any $b\in B_{1}\left(  0\right)  ,$ for any $\varepsilon>0,$
there exists $\eta>0$ such that the convex, thus connected subdifferential
$\partial\tilde{u}\left(  x\right)  $ satisfies%
\[
\partial\tilde{u}\left(  a\right)  \subset B_{\varepsilon}\left(
\partial\tilde{u}\left(  b\right)  \right)  \ \ \ \text{for all }\left\vert
a-b\right\vert <\eta;
\]
see [R, Corollary 24.5.1].
\end{proof}

In order to proceed further, we observe some simple key facts. The Legendre
transform (\ref{Legendre}) is order reversing and respects constants: $f\leq
g\rightarrow f^{\ast}\geq g^{\ast},$ and $(f+c)^{\ast}=f^{\ast}-c.$ In
particular, if $f-c\leq g\leq f+c$, then $f^{\ast}+c\geq g^{\ast}\geq f^{\ast
}-c,$ so the transform respects $C^{0}$ uniform convergence. Consequently, the
$\alpha$-rotation (\ref{a-rotation}) also enjoys these three properties,
except now the order is preserved: if $u-c\leq v\leq u+c,$ then $\bar{u}%
-c\leq\bar{v}\leq\bar{u}+c.$

As an immediate application of the uniform respect for the $\alpha$-rotation,
by taking smooth and $\cot\alpha$-semiconvex approximations of the $\cot
\alpha$-semiconvex function $u\left(  x\right)  ,$ we see $\bar{u}$ is
$C^{1,1}$ from above, and if $u\left(  x\right)  $ is $\left(  \cot
\alpha-\delta\right)  $-semiconvex, also $C^{1,1}$ from below%
\[
-K\left(  \alpha,\delta\right)  \ I\leq D^{2}\bar{u}\leq\cot\alpha\ I.
\]

A quick consequence of the order preservation is the preservation of the
supersolutions under the $\alpha$-rotation.

\begin{proposition}
Let $u+\frac{1}{2}\left(  \cot\alpha-\delta\right)  $ $\left\vert x\right\vert
^{2}$ be convex and $u$ be a viscosity supersolution of (\ref{EsLag}) on
$B_{1}\left(  0\right)  .$ Then the $\alpha$-rotation $\bar{u}$ in
(\ref{a-rotation}) is a corresponding viscosity supersolution of
(\ref{Ea-sLag}) on open $\bar{\Omega}=\partial\tilde{u}\left(  B_{1}\left(
0\right)  \right)  $ with $\tilde{u}=su+\frac{c}{2}\left\vert x\right\vert
^{2}.$
\end{proposition}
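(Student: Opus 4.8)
The plan is to transfer the defining property of a viscosity supersolution through the $\alpha$-rotation by exploiting the three structural properties just recorded: the rotation is order preserving, respects additive constants, and respects $C^{0}$ uniform convergence. First I would recall what must be shown: for every $\bar{x}_{0}\in\bar{\Omega}$ and every smooth test function $\bar{\varphi}$ touching $\bar{u}$ from below at $\bar{x}_{0}$ (i.e. $\bar{\varphi}\le\bar{u}$ near $\bar{x}_{0}$ with equality at $\bar{x}_{0}$), one has $\bar{F}\bigl(D^{2}\bar{\varphi}(\bar{x}_{0})\bigr)\le 0$. By Lemma 2.1 the point $\bar{x}_{0}$ lies in $\partial\tilde{u}(x_{0})$ for some $x_{0}\in B_{1}(0)$, and by the $C^{1,1}$-from-above bound $D^{2}\bar{u}\le\cot\alpha\,I$ already established, we may assume $D^{2}\bar{\varphi}(\bar{x}_{0})<\cot\alpha\,I$ (otherwise shift $\bar\varphi$ down by a small multiple of $|\bar x-\bar x_0|^2$, which only helps).

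The key step is to invert the rotation on the test function. Since $\bar{\varphi}$ is smooth with $D^{2}\bar{\varphi}<\cot\alpha\,I$ near $\bar{x}_{0}$, I would apply the inverse $\alpha$-rotation (equivalently, Proposition 2.1 run backwards, or a second Legendre transform as in the $\pi/2$ case) to produce a smooth function $\varphi$ on a neighborhood of $x_{0}$ whose $\alpha$-rotation is $\bar{\varphi}$; the constraint $D^{2}\bar\varphi<\cot\alpha\,I$ is exactly what is needed for the inverse transform to be single-valued and smooth, and by the angle-shift identity $\arctan\bar\lambda_i=\arctan\lambda_i-\alpha$ we get $F\bigl(D^2\varphi(x_0)\bigr)=\bar F\bigl(D^2\bar\varphi(\bar x_0)\bigr)$. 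Now the order-preservation property does the work: from $\bar{\varphi}\le\bar{u}$ on a neighborhood of $\bar{x}_{0}$ we want to conclude $\varphi\le u$ on a neighborhood of $x_{0}$ with equality at $x_{0}$. This is where I expect the main obstacle to lie — the order-preservation statement in the excerpt is phrased globally (if $u-c\le v\le u+c$ then $\bar u-c\le\bar v\le\bar u+c$), whereas here $\bar\varphi$ is only defined and only lies below $\bar u$ locally near $\bar x_0$. I would handle this by cutting off: replace $\bar\varphi$ by $\min(\bar\varphi+\text{(a suitable negative constant)},\,\bar u)$ far from $\bar x_0$, i.e. extend $\bar\varphi$ to a global function $\bar\psi\le\bar u$ on all of $\bar\Omega$ with $\bar\psi=\bar\varphi$ near $\bar x_0$; since the rotation is monotone, its (suitably interpreted) inverse applied to $\bar\psi\le\bar u$ yields $\psi\le u$ on $B_1(0)$ with $\psi$ agreeing with $\varphi$ near $x_0$ and $\psi(x_0)=u(x_0)$. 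Then $\varphi$ (or its lower envelope with a quadratic) is a legitimate lower test function for $u$ at $x_0$, so the supersolution property of $u$ gives $F\bigl(D^2\varphi(x_0)\bigr)\le 0$, hence $\bar F\bigl(D^2\bar\varphi(\bar x_0)\bigr)\le 0$, as desired.

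An alternative, and perhaps cleaner, route avoids inverting test functions entirely and instead uses approximation: take smooth, $(\cot\alpha-\delta)$-semiconvex functions $u_k\searrow u$ (or converging uniformly) that are supersolutions of a slightly perturbed equation, rotate each to get smooth $\bar u_k$ solving the rotated equation by Proposition 2.1, and pass to the limit. The uniform-convergence respect of the rotation gives $\bar u_k\to\bar u$ in $C^0$, and smooth supersolutions of $\bar F=0$ pass to viscosity supersolutions under uniform limits by the standard stability theorem for viscosity solutions. The subtlety here is producing the smooth supersolution approximations $u_k$ of the original equation with controlled semiconvexity — but for supersolutions this is standard (sup-convolution / inf-convolution regularization preserves the supersolution property up to a vanishing error and the semiconvexity is automatic), which is precisely why the proposition is flagged in the introduction as the easy direction, in contrast to the subsolution case treated in Proposition 2.3. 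I would present the first argument as the main proof and remark that the second gives an alternative; the hard part in either case is the bookkeeping at the boundary of the local neighborhoods, which the cutoff/global-extension trick resolves using nothing beyond the three stated properties of the rotation and Lemma 2.1.
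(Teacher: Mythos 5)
Your main argument is essentially the paper's proof: take a lower test function at $\bar{x}_{0}$, perturb it so that its Hessian is strictly below $\cot\alpha\, I$ (then let the perturbation go to zero), pre-rotate it, and use the order preservation of the reverse rotation together with the angle-shift identity to transfer the supersolution inequality. The paper streamlines exactly the point you worry about by testing with quadratic functions $\bar{Q}$: a quadratic with $D^{2}\bar{Q}<\cot\alpha\, I$ pre-rotates to a globally defined quadratic with constant Hessian, so no local inverse construction, no cutoff/global extension, and no tracking of the touching point are needed -- your cutoff trick is workable but is extra machinery replacing this simplification. One caution about your ``alternative, cleaner'' route: it is not sound as sketched, since mollification does not preserve supersolutions of the concave operator $F$ (which is precisely why the paper reserves the approximation argument for convex \emph{subsolutions} in Proposition 2.3), while Jensen-type inf-convolutions do preserve supersolutions but are only semiconcave, not smooth, so the smooth rotation calculus of Proposition 2.1 cannot be applied to them directly.
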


\begin{proof}
Let $\bar{Q}$ be any quadratic function touching $\bar{u}$ from below locally
somewhere on the open set $\bar{\Omega},$ say the origin. Already $D^{2}%
\bar{Q}\leq D^{2}\bar{u}\leq$ $\cot\alpha\ I.$ By subtracting $\varepsilon
\left\vert \bar{x}\right\vert ^{2}$ from $\bar{Q},$ then taking the limit as
$\varepsilon$ goes to $0,$ we assume $D^{2}\bar{Q}<\cot\alpha\ I.$ This
guarantees the existence of its pre-rotated quadratic function $Q.$ From the
order preservation of $\alpha$-rotation, which is also valid for any reverse
rotation, we see the pre-rotated quadratic function $Q$ touches $u$ from below
somewhere on $B_{1}\left(  0\right)  .$ Because $u$ is a supersolution there,
$\sum_{i=1}^{n}\arctan\lambda_{i}\left(  D^{2}Q\right)  \leq\Theta,$ and in
turn, $\bar{F}\left(  D^{2}\bar{Q}\right)  =\sum_{i=1}^{n}\arctan\bar{\lambda
}_{i}\left(  D^{2}\bar{Q}\right)  -\Theta+n\alpha\leq0.$
\end{proof}

The preservation of subsolutions under $\alpha$-rotation is no quick matter.
Let $\bar{Q}$ be a quadratic function touching $\bar{u}$ from above. When one,
but not all, of the eigenvalues is largest possible $\cot\alpha,$ we are
unable to check $\bar{F}\left(  D^{2}\bar{Q}\right)  \geq0.$ In this scenario,
one cannot \textquotedblleft lower\textquotedblright\ $\bar{Q}$ so that all
the eigenvalues are strictly less than $\cot\alpha$ and above $\bar{u}$ at the
same time. The pre-rotated function $Q$ touching $u$ from above is not a
quadratic function anymore. It is a cone in some subspace, and only quadratic
in the complementary subspace. One cannot see $F\left(  D^{2}Q\right)  \geq0.$
In fact, $u$ is not \ $C^{1,1}$ from above at this touching point; the very
definition of viscosity subsolution requires no checking at such points (of no
touching by quadratic functions), and in turn, gives no information on $Q.$
Moreover, we are unable to show that the points at such $\bar{Q}$ touching
$\bar{u}$ from above have zero measure. Otherwise, the $C^{1,1}$ function
$\bar{u}$ is readily a subsolution.

We are only able to show the preservation of convex subsolutions by convex
smooth subsolution approximations.

\begin{proposition}
Let $u$ be a convex viscosity subsolution of (\ref{EsLag}) on $B_{1.2}\left(
0\right)  .$ Then the (anti-clockwise $\alpha\in\left(  0,\frac{\pi}%
{2}\right)  $) $\alpha$-rotation $\bar{u}$ in (\ref{a-rotation}) is a
corresponding viscosity subsolution of (\ref{Ea-sLag}) on open and connected
$\bar{\Omega}=\partial\tilde{u}\left(  B_{1}\left(  0\right)  \right)  $ with
$\tilde{u}\left(  x\right)  =\frac{1}{2}c\left\vert x\right\vert
^{2}+su\left(  x\right)  .$
\end{proposition}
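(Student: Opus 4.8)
The plan is to reduce the statement to the smooth case of Proposition~2.1 (together with equation~\eqref{Ea-sLag}) by a careful approximation argument, using the order-preservation and $C^{0}$-respect of the $\alpha$-rotation that were established just above the statement. Concretely, since $u$ is a convex viscosity subsolution of the now \emph{concave} equation \eqref{EsLag} on $B_{1.2}(0)$, I would first produce a sequence of smooth convex subsolutions $u_{k}$ on a slightly smaller ball, say $B_{1.1}(0)$, with $u_{k}\to u$ uniformly. The standard device is sup-convolution or mollification: because \eqref{EsLag} is concave in $D^{2}u$, the mollification $u_{k}=u*\rho_{1/k}$ of a convex subsolution is again a convex subsolution (Jensen's inequality applied through concavity gives $\sum\arctan\lambda_{i}(D^{2}u_{k})\geq \Theta$ in the viscosity, hence classical, sense once $u_{k}$ is smooth), and convexity is obviously preserved. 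One should record this as a separate small lemma or cite the usual sup/inf-convolution facts; the only subtlety is that convexity must be genuinely retained, which mollification handles cleanly.

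Next I would apply Proposition~2.1 (smooth case) to each $u_{k}$: since $u_{k}$ is convex, $D^{2}u_{k}\geq 0>-\cot\alpha\,I$, so the $\alpha$-rotation $\bar u_{k}$ defined by \eqref{a-rotation} is smooth on $\bar\Omega_{k}=[cx+sDu_{k}(x)](B_{1}(0))$ and, by the canonical-angle computation preceding \eqref{Ea-sLag}, satisfies $\bar F(D^{2}\bar u_{k})=\sum\arctan\bar\lambda_{i}(D^{2}\bar u_{k})-\Theta+n\alpha\geq 0$ there, i.e.\ $\bar u_{k}$ is a classical subsolution of \eqref{Ea-sLag}. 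By the three properties highlighted after the proof of Lemma~2.1 — order preservation and respect for $C^{0}$ convergence of the $\alpha$-rotation — $u_{k}\to u$ uniformly on $B_{1}(0)$ forces $\bar u_{k}\to \bar u$ uniformly on $\bar\Omega=\partial\tilde u(B_{1}(0))$, after checking that the domains $\bar\Omega_{k}$ exhaust $\bar\Omega$ (which follows from the inclusion $\bar\Omega_{k}\supset \bar B_{s\delta_{k}(1-|a|)}(\partial\tilde u_{k}(a))$ in the proof of Lemma~2.1, with $\delta_{k}\to$ the convexity margin, plus the uniform $C^{1,1}$-from-above bound $D^{2}\bar u_{k}\leq\cot\alpha\,I$). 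A uniform limit of classical subsolutions of the continuous equation $\bar F=0$ is a viscosity subsolution of \eqref{Ea-sLag}: if $\bar Q$ touches $\bar u$ from above at an interior point of $\bar\Omega$, then after subtracting $\varepsilon|\bar x|^{2}$ it touches strictly, hence touches $\bar u_{k}$ from above at nearby points $\bar x_{k}$, giving $\bar F(D^{2}\bar Q-2\varepsilon I)\geq 0$ and, letting $\varepsilon\to 0$, $\bar F(D^{2}\bar Q)\geq 0$. This is the same closure-under-uniform-limits argument used in Proposition~2.2, run in the subsolution direction.

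The main obstacle — and the reason this proposition, unlike Proposition~2.2, is stated only for convex solutions — is the construction step: one must guarantee that the approximants $u_{k}$ are simultaneously smooth, convex, and subsolutions on a fixed slightly smaller ball. Plain mollification of a convex function is smooth and convex and converges uniformly, and concavity of $F$ in $D^{2}u$ is exactly what makes the averaged function a subsolution, so in fact mollification should suffice here and this is where I would concentrate the care; the slightly larger ball $B_{1.2}(0)$ in the hypothesis is precisely the room needed for mollification on $B_{1.1}(0)\supset B_{1}(0)$. The secondary technical point is the domain convergence $\bar\Omega_{k}\to\bar\Omega$: one should verify that every compact subset of the open connected set $\bar\Omega$ from Lemma~2.1 eventually lies in $\bar\Omega_{k}$, so that a test function touching $\bar u$ at an interior point of $\bar\Omega$ is legitimately compared with $\bar u_{k}$. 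Both points are routine given Lemma~2.1 and the convergence properties already isolated, so the proof is essentially: mollify to get convex smooth subsolutions, rotate via Proposition~2.1 to get classical subsolutions of \eqref{Ea-sLag}, and pass to the uniform limit.
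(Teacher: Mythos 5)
Your overall strategy coincides with the paper's proof: mollify the convex subsolution (the concavity of \eqref{EsLag} on convex functions, via \cite{CC}, p.~56, makes $u\ast\rho_{\varepsilon}$ a smooth convex subsolution), rotate the smooth approximations via \eqref{a-rotation} so that each $\bar{u}_{\varepsilon}$ is a classical subsolution of \eqref{Ea-sLag}, and pass to the uniform limit using the order/$C^{0}$ properties of the rotation and the stability of viscosity subsolutions. The substantive gap is the step you label ``routine'': showing that the domains of the rotated approximations eventually contain $\bar{\Omega}=\partial\tilde{u}\left(  B_{1}\left(  0\right)  \right)  $. The justification you propose --- the ball inclusion $\bar{\Omega}_{k}\supset\bar{B}_{s\delta_{k}\left(  1-\left\vert a\right\vert \right)  }\left(  \partial\tilde{u}_{k}\left(  a\right)  \right)  $ from Lemma 2.1 applied to $\tilde{u}_{k}$ --- does not suffice as stated: for smooth $u_{k}$ the set $\partial\tilde{u}_{k}\left(  a\right)  $ is the single point $D\tilde{u}_{k}\left(  a\right)  $, whereas $\partial\tilde{u}\left(  a\right)  $ can be a convex set of diameter much larger than $s\delta_{k}\left(  1-\left\vert a\right\vert \right)  $ wherever $u$ has a corner, so a fixed-radius ball about $D\tilde{u}_{k}\left(  a\right)  $ need not capture a given $\bar{a}\in\partial\tilde{u}\left(  a\right)  $, and you give no mechanism for finding a nearby point whose mollified gradient approximates that particular subgradient. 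The paper devotes its Step 1 precisely to this point, with a different argument: extend $u$ to an entire convex function, use the uniform convexity $D^{2}\tilde{u}_{\varepsilon}\geq cI$ to solve $D\tilde{u}_{\varepsilon}\left(  b\right)  =\bar{a}$ for some $b\in\mathbb{R}^{n}$, and then combine the two $c$-convexity inequalities for $\tilde{u}$ and $\tilde{u}_{\varepsilon}$ at $a$ and $b$ with the $C^{0}$ closeness $\left\vert \tilde{u}-\tilde{u}_{\varepsilon}\right\vert =o\left(  1\right)  $ to get $\left\vert b-a\right\vert ^{2}\leq2\left\vert o\left(  1\right)  \right\vert /c$, hence $\bar{a}\in D\tilde{u}_{\varepsilon}\left(  B_{1.1}\left(  0\right)  \right)  $. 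You need this (or an equivalent) argument before your limit step is legitimate; note also that the $C^{0}$-respect of the rotation, which you invoke to get $\bar{u}_{k}\rightarrow\bar{u}$ uniformly on $\bar{\Omega}$, itself presupposes that both rotations are defined there.

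A minor slip: in your closing stability argument the perturbation goes the wrong way. If $\bar{Q}$ touches $\bar{u}$ from above at $\bar{x}_{0}$, you should \emph{add} $\varepsilon\left\vert \bar{x}-\bar{x}_{0}\right\vert ^{2}$ to make the touching strict from above and thereby produce nearby touching points for $\bar{u}_{k}$, obtaining $\bar{F}\left(  D^{2}\bar{Q}+2\varepsilon I\right)  \geq0$ and then letting $\varepsilon\rightarrow0$; subtracting the quadratic destroys the one-sided touching. Since this is the standard stability of viscosity subsolutions under uniform convergence (which the paper simply cites), it is easily repaired, but as written that step is incorrect.
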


\begin{proof}
Step 1. By Lemma 2.1 with $\delta=c/s$ and the sum rule $\partial\tilde
{u}=cx+s\partial u\left(  x\right)  ,$ we see that $\alpha$-rotation $\bar{u}$
is indeed defined on the open and connected set $\left(  cx+s\partial u\left(
x\right)  \right)  \left(  B_{1.2}\left(  0\right)  \right)  .$ For
convenience, we extend the convex $u\left(  x\right)  $ to an entire convex
function on $\mathbb{R}^{n}.$ Set the standard convolution $u_{\varepsilon
}\left(  x\right)  =u\ast\rho_{\varepsilon}\left(  x\right)  $ with
$\rho_{\varepsilon}\left(  x\right)  =\varepsilon^{-n}\rho\left(
x/\varepsilon\right)  $ and nonnegative $\rho\left(  x\right)  =\rho\left(
\left\vert x\right\vert \right)  \in C_{0}^{\infty}\left(  \mathbb{R}%
^{n}\right)  $ satisfying $\int_{\mathbb{R}^{n}}\rho\left(  x\right)  dx=1.$
Given the $C^{0}$ uniform continuity of $u,$ we have $\left\vert
u_{\varepsilon}\left(  x\right)  -u\left(  x\right)  \right\vert <o\left(
1\right)  $ for all small enough $\varepsilon.$

We claim that the smooth $\alpha$-rotation $\bar{u}_{\varepsilon}$ is defined
at least on $\bar{\Omega}$ for all small enough $\varepsilon.$ We verify this
by showing that for any $\bar{a}\in\partial\tilde{u}\left(  a\right)  $ with
$a\in B_{1}\left(  0\right)  ,$ there exists $b$ such that $D\tilde
{u}_{\varepsilon}\left(  b\right)  =\bar{a}$ with $\tilde{u}_{\varepsilon
}\left(  x\right)  =\frac{1}{2}c\left\vert x\right\vert ^{2}+su_{\varepsilon
}\left(  x\right)  $ and $\left\vert b-a\right\vert \leq o\left(  1\right)  $
as $\varepsilon$ goes to $0.$ Consequently, $\partial\tilde{u}\left(
B_{1}\left(  0\right)  \right)  \subset D\tilde{u}_{\varepsilon}\left(
B_{1.1}\left(  0\right)  \right)  $ for all small enough $\varepsilon.$

Now for any $\bar{a}\in\partial\tilde{u}\left(  a\right)  ,$ given the uniform
convexity of $\tilde{u}_{\varepsilon},$ $D^{2}\tilde{u}_{\varepsilon}\geq cI,$
there exists $b\in\mathbb{R}^{n}$ such that $D\tilde{u}_{\varepsilon}\left(
b\right)  =\bar{a}.$ By subtracting linear function $\bar{a}\cdot x$ from both
$\tilde{u}$ and $\tilde{u}_{\varepsilon},$ we assume $0\in\partial\tilde
{u}\left(  a\right)  \cap\partial\tilde{u}_{\varepsilon}\left(  b\right)  .$
Then coupled with the $c$-convexity of $\tilde{u}$ and $\tilde{u}%
_{\varepsilon},$ we have%
\[
\tilde{u}\left(  b\right)  -\tilde{u}\left(  a\right)  \geq\frac{c}%
{2}\left\vert b-a\right\vert ^{2}\ \ \text{and\ }\tilde{u}_{\varepsilon
}\left(  a\right)  -\tilde{u}_{\varepsilon}\left(  b\right)  \geq\frac{c}%
{2}\left\vert a-b\right\vert ^{2}.
\]
For small enough $\varepsilon,$ we always have%
\[
\tilde{u}\left(  a\right)  -\tilde{u}_{\e}\left(  a\right)  \geq-\left\vert
o\left(  1\right)  \right\vert \ \ \ \text{and \ }\tilde{u}_{\e}(b)-\tilde{u}\left(  b\right)  \geq-\left\vert o\left(  1\right)  \right\vert
.
\]
Adding all the above four inequalities together, we get%
\[
\left\vert b-a\right\vert ^{2}\leq2\left\vert o\left(  1\right)  \right\vert
/c.
\]
for small enough $\varepsilon.$ Therefore, we have proved that $\bar
{u}_{\varepsilon}$ is defined on $\bar{\Omega}=\partial\tilde{u}\left(
B_{1}\left(  0\right)  \right)  \subset D\tilde{u}_{\varepsilon}\left(
B_{1.1}\left(  0\right)  \right)  $ for all small enough $\varepsilon.$

Step 2. Note that the equation (\ref{EsLag}) is concave for convex $u.$ By the
well-known result in [CC, p. 56], the solid convex average $u\ast
\rho_{\varepsilon}$ (instead of the hollow spherical one there) is still a
subsolution of (\ref{EsLag}) in $B_{1.1}\left(  0\right)  $ for small enough
$\varepsilon>0.$ For smooth convex subsolutions $u_{\varepsilon},$ the
corresponding smooth $\alpha$-rotation $\bar{u}_{\varepsilon}$ is a
subsolution of (\ref{Ea-sLag}) on $\bar{\Omega}$ from Step 1 and the end of
Section 2.1. The viscosity solutions are stable under $C^{0}$ uniform
convergence. Hence uniformly convergent limit $\lim_{\varepsilon\rightarrow
0}\bar{u}_{\varepsilon}=\bar{u}$ is a viscosity subsolution of (\ref{Ea-sLag})
on $\bar{\Omega}.$
\end{proof}

\section{Proof of the theorem}

By Proposition 2.2 and Proposition 2.3 with $\alpha=\frac{\pi}{4}$ and
$\delta=1,$ the $\frac{\pi}{4}$-rotation $\bar{u}$ is a viscosity solution of
(\ref{Ea-sLag}) on open and connected set $\bar{\Omega}=\left(  cx+s\partial
u\left(  x\right)  \right)  \left(  B_{1}\left(  0\right)  \right)  $ (we may
assume $u$ is defined on $B_{1.2}\left(  0\right)  $ by scaling $1.2^{2}%
u\left(  x/1.2\right)  $ ). By the argument before Proposition 2.2, we have%
\[
-I\leq D^{2}\bar{u}\leq I.
\]

Step 1. We now claim $\bar{u}$ is smooth by modifying the dimension-3 interior
$C^{2,\alpha}$ a priori estimate arguments in \cite{Y1}, using the a priori
calculation in \cite{Y2}, Proposition 2.1. We can repeat the proof of Lemma
2.2 in \cite{Y1} almost verbatim, and blow up the Lipschitz minimal surface
$(\bar{x},D\bar{u}(\bar{x}))$ at any point $\bar{p}\in\bar{\Omega}$ to produce
a graphical minimal tangent cone $T$ at $(\bar{p},D\bar{u}(\bar{p}))$
satisfying the same Hessian bounds. If $T$ is not smooth away from the origin,
then by the dimension reduction argument, we produce a graphical minimal cone
$C$ smooth away from its vertex at $(\bar{p},D\bar{u}(\bar{p}))$ which
satisfies the same Hessian bounds. Now, invoking the Hessian bounds and
Proposition 2.1 in \cite{Y2}, we conclude $C$ is flat, hence that $T$ is
smooth away from its vertex as in \cite{Y1}. Proposition 2.1 in \cite{Y2} then
implies that $T$ is flat. The rest of the proof of Theorem 1.1 in \cite{Y1}
now goes through. We conclude $\bar{u}\in C^{2,\alpha}(\bar{\Omega})$, hence
that the special Lagrangian submanifold $(\bar{x},D\bar{u}(\bar{x}))$ for
$\bar{x}\in\bar{\Omega}$ is smooth and even analytic, by the classical
elliptic theory (cf. Theorem 17.16 in \cite{GT} and \cite{M} p. 203).

Step 2. Next we show the strict inequality $D^{2}\bar{u}<I$ on the open and
connected set $\bar{\Omega},$ which then implies that the original $u$
satisfies $D^{2}u<+\infty,$ and hence is smooth and even analytic on
$B_{1}\left(  0\right)  .$

Instead of invoking Lemma 4.1 in [CCY], we give another simple argument.
Otherwise, there exists $\bar{p}\in\bar{\Omega}$ such that $1=\bar{\lambda
}_{\max}=\bar{\lambda}_{1}=\cdots=\bar{\lambda}_{m}>\bar{\lambda}_{m+1}%
\geq\cdots\geq\bar{\lambda}_{n}\geq-1$ at $\bar{p},$ where $\bar{\lambda}%
_{i}^{\prime}s$ are the eigenvalues of $D^{2}\bar{u}.$ We claim that the
Lipschitz function $\bar{\lambda}_{\max}$ is subharmonic, or rather the smooth
function
\[
b_{m}=\frac{1}{m}\sum_{i=1}^{m}\ln\sqrt{1+\bar{\lambda}_{i}^{2}}%
\]
satisfies $\bigtriangleup_{\bar{g}}b_{m}\geq0\ \ \ $near $\bar{p},$ where
$\bar{g}=I+D^{2}\bar{u}D^{2}\bar{u}$ is the induced metric of $\left(  \bar
{x},D\bar{u}\left(  \bar{x}\right)  \right)  $ in $\mathbb{R}^{n}%
\times\mathbb{R}^{n}.$ By the formula in [WdY, p. 487 ] with all the
coefficients for $h_{ijk}^{2}$ re-arranged as sums of nonnegative terms for
$1\geq\bar{\lambda}_{1}\geq\cdots\geq\bar{\lambda}_{m}>\bar{\lambda}_{m+1}%
\geq\cdots\geq\bar{\lambda}_{n}\geq-1$ near $\bar{p},$ we have%

\begin{align*}
m\bigtriangleup_{\bar{g}}b_{m}  &  =%
%TCIMACRO{\tsum \limits_{\gamma=1}^{m}}%
%BeginExpansion
{\textstyle\sum\limits_{\gamma=1}^{m}}
%EndExpansion
\bigtriangleup_{\bar{g}}\ln\sqrt{1+\bar{\lambda}_{\gamma}^{2}}\\
&  =%
%TCIMACRO{\tsum \limits_{k\leq m}}%
%BeginExpansion
{\textstyle\sum\limits_{k\leq m}}
%EndExpansion
\left(  1+\bar{\lambda}_{k}^{2}\right)  {\small h}_{kkk}^{2}{\small +}(%
%TCIMACRO{\tsum \limits_{i<k\leq m}}%
%BeginExpansion
{\textstyle\sum\limits_{i<k\leq m}}
%EndExpansion
+%
%TCIMACRO{\tsum \limits_{k<i\leq m}}%
%BeginExpansion
{\textstyle\sum\limits_{k<i\leq m}}
%EndExpansion
)\left(  3+\bar{\lambda}_{i}^{2}+2\bar{\lambda}_{i}\bar{\lambda}_{k}\right)
{\small h}_{iik}^{2}\ \ \ \\
&  +%
%TCIMACRO{\tsum \limits_{k\leq m<i}}%
%BeginExpansion
{\textstyle\sum\limits_{k\leq m<i}}
%EndExpansion
\frac{2\bar{\lambda}_{k}\left(  1+\bar{\lambda}_{k}\bar{\lambda}_{i}\right)
}{\bar{\lambda}_{k}-\bar{\lambda}_{i}}{\small h}_{iik}^{2}+%
%TCIMACRO{\tsum \limits_{i\leq m<k}}%
%BeginExpansion
{\textstyle\sum\limits_{i\leq m<k}}
%EndExpansion
\frac{\bar{\lambda}_{i}-\bar{\lambda}_{k}+\bar{\lambda}_{i}^{2}\left(
2+\bar{\lambda}_{i}^{2}+\bar{\lambda}_{i}\bar{\lambda}_{k}\right)  }%
{\bar{\lambda}_{i}-\bar{\lambda}_{k}}h_{iik}^{2}\\
&  +2%
%TCIMACRO{\tsum \limits_{i<j<k\leq m}}%
%BeginExpansion
{\textstyle\sum\limits_{i<j<k\leq m}}
%EndExpansion
\left(  3+\bar{\lambda}_{i}\bar{\lambda}_{j}+\bar{\lambda}_{j}\bar{\lambda
}_{k}+\bar{\lambda}_{k}\bar{\lambda}_{i}\right)  h_{ijk}^{2}\\
&  +2%
%TCIMACRO{\tsum \limits_{i<j\leq m<k}}%
%BeginExpansion
{\textstyle\sum\limits_{i<j\leq m<k}}
%EndExpansion
\left(  1+\bar{\lambda}_{i}\bar{\lambda}_{j}+\bar{\lambda}_{i}\frac
{1+\bar{\lambda}_{i}\bar{\lambda}_{k}}{\bar{\lambda}_{i}-\bar{\lambda}_{k}%
}+\bar{\lambda}_{j}\frac{1+\bar{\lambda}_{j}\bar{\lambda}_{k}}{\bar{\lambda
}_{j}-\bar{\lambda}_{k}}\right)  h_{ijk}^{2}\\
&  +2%
%TCIMACRO{\tsum \limits_{i\leq m<j<k}}%
%BeginExpansion
{\textstyle\sum\limits_{i\leq m<j<k}}
%EndExpansion
\bar{\lambda}_{i}\left(  \frac{1+\bar{\lambda}_{i}\bar{\lambda}_{j}}%
{\bar{\lambda}_{i}-\bar{\lambda}_{j}}+\frac{1+\bar{\lambda}_{j}\bar{\lambda
}_{k}}{\bar{\lambda}_{j}-\bar{\lambda}_{k}}\right)  h_{ijk}^{2}\\
&  \geq0\ \ \ \ \ \text{near }\bar{p}.
\end{align*}
By the strong maximum principle $\bar{\lambda}_{\max}\equiv1$ everywhere on
the connected open set $\bar{\Omega}=\partial\tilde{u}\left(  B_{1}\left(
0\right)  \right)  .$ Note that the constant rank result in [CGM, p. 1772]
does not apply here, as our smooth solution $\bar{u}$ with $-I\leq D^{2}%
\bar{u}\leq I$ cannot be turned into a smooth convex solution of (\ref{EsLag}) yet.

But we can always arrange a quadratic function $Q=\frac{1}{2}K\left\vert
x\right\vert ^{2}+t$ touching the bounded continuous function $u$ from above
at an interior point $a$ in $B_{1}\left(  0\right)  .$ By the order
preservation of the rotation, $\bar{Q}=\frac{K-1}{2\left(  K+1\right)
}\left\vert x\right\vert ^{2}+t$ would touch $\bar{u}$ from above at the
corresponding interior point $\bar{a}$ in $\bar{\Omega}.$ It follows that
$D^{2}\bar{u}\left(  \bar{a}\right)  \leq\frac{K-1}{\left(  K+1\right)  }I<I.$
This contradiction shows that $D^{2}\bar{u}<I$ on $\bar{\Omega}.$

Step 3. We conclude by noting that now the original special Lagrangian graph
$(x,Du(x))$ is smooth and even analytic, and the effective Hessian bound in
Theorem \ref{theorem1} follows from Theorem 1.1 in \cite{CWY}. In fact, a
sharper bound follows from the a priori estimate Theorem 1.1 in \cite{WdY2},
since all proofs there go through when $\Theta\geq(n-2)\frac{\pi}{2}$ is
replaced by $\lambda_{i}\geq0$. An implicit Hessian bound would also follow
from a compactness argument, see e.g. \cite{L}.

\end{document}